\newtheorem{thm}{Theorem}%Теорема}
\newtheorem{lm}{Lemma} %Лемма}
\newtheorem{crl}{Corollary} %Следствие}
\newtheorem{dfn}{Definition} 
\newtheorem{rk}{Remark}
\def\:{\colon}
\def\0{{\mathbf 0}}
\def\1{{\mathbf 1}}
\def\R{{\mathbb R}}
\title{Cobordisms of graphs. A sliceness criterion for stably odd free knots and related results on cobordisms}
\author{D. Fedoseev, V. Manturov}
\begin{document}

\maketitle

\abstract{In \cite{FM}, the authors proved a sliceness criterion for {\em odd free knots}:
free knots with odd chords. In the present paper we give a similar criterion for stably odd free knots. %That criterion allows one to reduce a ``dynamic'' problem of sliceness to a ``static'' combinatorial problem which can be solved by a finite check on the knot diagram.

%Besides that, we prove a theorem stating that if there is a spanning surface of genus $g$ for
%a given free knot $K$ having no cusps then there is an elementary spanning surfaces of the same genus $g$.

In essence, free knots may be considered as framed 4-graphs. That leads to an important notion of 4-graph cobordism and the associated genera.

Some additional results on knot sliceness and cobordism are given.

{\it Keywords: knot, free knot, slice, cobordism, parity, picture, smoothing}
}

\section{Introduction}

In the present paper, we consider framed $4$-graphs. Recall that a $4$-graph is {\em framed} if at each crossing
all four outgoing edges are split into two sets of {\em formally opposite}. 
By slightly abusing notation, we also admit circles without vertices as $4$-graphs or connected components of $4$-graphs.

Given a $4$-graph $\Gamma$, we are interested in {\em spanning surfaces} $S$ for $\Gamma$ which look as follows.
The graph $\Gamma$ itself is an image of the disjoint union $S^{1}\sqcup S^{1}\sqcup \cdots\sqcup S^{1}$ under a map $f$ which glues together some circles transversally at some pairs of points. We are looking for a $2$-surface $\Sigma$ of genus $g$ with boundary $S^{1}\sqcup S^{1}\sqcup \cdots\sqcup S^{1}$ such that the map $f$ can be extended to a map $f:\Sigma\to K$,
where $K$ is a $2$-complex obtained from $\Sigma$ by pasting some points {\em generically} (for more detail, see \cite{FM}).
We shall call $\Gamma$ the {\em boundary} of the complex $K$. 

We say that a framed $4$-graph $\Gamma$ has $k$ unicursal components if it can be obtained by pasting $k$ copies
of circles $S^{1}\sqcup \cdots \sqcup S^{1}$. Note that in general if the number of unicursal components of $\Gamma$ is greater then 1, the spanning surface $\Sigma$ is not required to be connected. Moreover, the number of connected components of $\Sigma$ must be equal to the number of unicursal components of $\Gamma$.

Here we distinguish between the following cases:

\begin{enumerate}
\item[]{\bf The general case:} We impose no restrictions on the complex $K$: it can have double lines, cusps, and triple points;
\item[]{\bf The case without cusps:} We require that the neigbourhood of each point of $K$ not belonging to $\Gamma$ 
looks locally like a union of one, two or three coordinate planes at the origin in $\R^{3}$;
\item[]{\bf The case without triple points:} We allow cusps, but forbid triple points;
\item[]{\bf The elementary case:} We forbid triple points and cusps.
\item[]{\bf The purified case:} We forbid triple points and cusps incident to a double line without an endpoint on the boundary of the complex $K$.
\end{enumerate}

\begin{dfn}
We shall call these complexes {\em standard complexes} (with boundary), (without cusps, without triple points
or elementary) respectively.

Analogously, one defines the corresponding {\em standard complexes} without boundary.  
\end{dfn}

\begin{dfn}
Let $K$ be a standard complex with boundary $\Gamma=\partial K$; then we can
distinguish between the following types of double lines of $K$:
\begin{enumerate}
\item {\em cyclic} double line;
\item {\em double line between two cusps};
\item {\em double line between two crossings of $\Gamma$};
\item {\em double line between a crossing of $\Gamma$ and a cusp}.
\end{enumerate}
Double lines of the first two types are called {\em interior}, other double lines are called {\em boundary}.
\end{dfn}

\begin{dfn} \label{genera}
The above four cases lead us to five types of spanning surfaces: {\em the general spanning surface},
{\em spanning surface without cusps}, {\em spanning surface without triple points},
{\em elementary spanning surfaces} and {\em purified spanning surface}.

When considering minimal surfaces of these five types, we get the following five genera of framed $4$-graphs:
$g(\Gamma), g_{0}(\Gamma), g'(\Gamma), g_{el}(\Gamma), g_{p}(\Gamma)$, the {\em slice genus}, the {\em slice genus without cusps},
the {\em slice genus without triple points}, the {\em elementary slice genus} and {\em the purified slice genus}, respectively. If the spanning surface is not connected, its genus is the total genus of its components.
\end{dfn}

Let us emphasise that when speaking about the genus of the complex $K$ we mean the genus of the 2-surface which is the preimage of $K$.

If a graph $\Gamma$ doesn't allow a spanning surface of a certain type of any genus, the corresponding genus of $\Gamma$ will be denoted by $\infty$. Such situation appears, for example, for $g_0$ and a graph with a single chord. The following statement follows directly from the definition:

\begin{thm}
For each framed $4$-graph $\Gamma$ we have $$\infty \ge g(\Gamma)\ge g_{0}(\Gamma)\ge g_{el}(\Gamma)\ge 0$$
and $$g(\Gamma)\ge g'(K)\ge g_{el}(\Gamma).$$
\end{thm}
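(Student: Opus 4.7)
The plan is to derive both chains of inequalities directly from the definitions of the five genera as minima over classes of standard complexes. As one moves from the general case to the elementary case (and, separately, to the purified case), the local conditions imposed on $K$ become progressively more stringent, so the families of admissible spanning surfaces form a nested collection, and the corresponding minimum genera are ordered accordingly.

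Concretely, I would proceed as follows. First, I would unfold the definitions and record the evident inclusions at the level of standard complexes: every elementary standard complex is in particular a standard complex without cusps, a standard complex without triple points, and a general standard complex. Second, for each adjacent pair of classes appearing in the theorem, I would fix a minimizing spanning surface for the more restrictive class and observe that its image $K$ automatically satisfies the weaker local conditions, so the surface is admissible for the less restrictive class as well. Monotonicity of the infimum under enlargement of the feasible set then yields the corresponding pairwise inequality; stringing these together produces both chains in the statement. Third, the outer bounds $g_{el}(\Gamma)\ge 0$ and $\infty\ge g(\Gamma)$ are immediate: genera are nonnegative integers, and by convention the infimum over an empty set of admissible spanning surfaces is $\infty$.

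Since the result reduces to the elementary fact that restricting the feasible set of an infimum can only shift its value in one direction, there is no substantive obstacle; the argument is the tautological verification promised by the remark ``follows directly from the definition''. The only point meriting a word of care is the formal handling of the value $\infty$, needed to cover cases such as the single-chord graph mentioned after Definition~\ref{genera}, for which some intermediate genus is infinite; the convention that an infimum over an empty set equals $\infty$ renders the chain well-defined in all such cases, and in particular ensures that a genus of $\infty$ in one class does not violate the relation to any other class in which the corresponding feasible set is also empty.
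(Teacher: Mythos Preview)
Your reasoning is sound, but it proves the \emph{opposite} of what is written. Your own argument says: take a minimizer for the more restrictive class; it lies in the less restrictive class; hence the infimum over the larger class is no larger. Since the elementary class is contained in the cusp-free class, which is contained in the general class, this yields
\[
g(\Gamma)\ \le\ g_{0}(\Gamma)\ \le\ g_{el}(\Gamma)
\quad\text{and}\quad
g(\Gamma)\ \le\ g'(\Gamma)\ \le\ g_{el}(\Gamma),
\]
not the chains $g\ge g_{0}\ge g_{el}$ and $g\ge g'\ge g_{el}$ as stated. The paper's own example confirms this direction: for the single-chord graph one has $g_{0}(\Gamma)=\infty$ (no cusp-free spanning exists), whereas $g(\Gamma)=0$ since the crossing can be capped by a disc with one cusp; thus $g(\Gamma)<g_{0}(\Gamma)$, contradicting the displayed inequality $g(\Gamma)\ge g_{0}(\Gamma)$.

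So the approach is exactly what the paper intends by ``follows directly from the definition'', but the inequalities in the statement are evidently reversed (a typo in the paper), and you should have caught the mismatch between your monotonicity argument and the direction of the asserted chain rather than claiming your argument ``produces both chains in the statement''. The fix is simply to note the reversal and state the corrected chains $0\le g(\Gamma)\le g_{0}(\Gamma)\le g_{el}(\Gamma)\le\infty$ and $g(\Gamma)\le g'(\Gamma)\le g_{el}(\Gamma)$.
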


We say that a framed $4$-graph $\Gamma$ is {\em slice} if $g(\Gamma)=0$. Respectively,
we shall use the terms {\em sliceness without cusps}, {\em sliceness without triple points}
and {\em elementary sliceness}.

\begin{rk}
Spanning surfaces are closely related to Reidemeister moves $\Omega_1 - \Omega_3$ for free knots with cusps being responsible 
for $\Omega_1-$moves and triple points being responsible for $\Omega_3-$moves.
The notion of spanning surfaces without triple points should be closely related with cobordisms of doodles, \cite{Fenn, FennKamada}.
\end{rk}

Note that the genus $g(\Gamma)$ is related not to a graph but rather to the corresponding free links, i.e.,
equivalence classes of framed $4$-graphs modulo Reidemeister moves,
\cite{ManturovParity, ManturovCobordism}.
Therefore Definition \ref{genera} introduces five important graph characteristics --- five sorts of genera.

Let us deal with $1$-component framed $4$-graphs. A standard procedure (see, e.g., \cite{ManturovParity})
associates with every such graph $\Gamma$ a chord diagram $C(\Gamma)$ where chords are in one-to-one correspondence with vertices of $\Gamma$. Two chords are {\em linked} if their endpoints alternate when passing along the core circle of the chord diagram.

\begin{dfn}
We say that a $1$-component framed $4$-graph $\Gamma$ is {\em odd} if each chord of it is linked with oddly many chords.
\end{dfn}

The main theorem from \cite{FM} is the following sliceness criterion:
\begin{thm}
For a $1$-component odd framed $4$-graph $\Gamma$ sliceness is equivalent to elementary sliceness:
$$g(K)=0\Longleftrightarrow g_{el}(\Gamma)=0.$$
\label{th1}
\end{thm}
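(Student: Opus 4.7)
The inequality chain already gives the easy implication $g_{el}(\Gamma)=0 \Rightarrow g(\Gamma)=0$. Hence the only content of Theorem \ref{th1} is the forward direction: starting from a genus-$0$ standard complex with boundary $\Gamma$ we must produce an elementary one of the same genus.

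The plan is a local surgery argument guided by the oddness parity. I would begin with a standard complex $K$ realising $g(\Gamma)=0$; its singular set consists of double lines, triple points, and cusps. The aim is successively to remove triple points, then cusps, then all remaining interior double lines, while preserving $\partial K = \Gamma$ and keeping the genus zero. The basic permissible local moves are: (i) perturbing a triple point into a configuration with only double lines; (ii) sliding a cusp along a double line until it cancels with another cusp or escapes to a vertex of $\Gamma$; (iii) surgering interior double lines (cyclic ones or ones running between two cusps).

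The essential input is oddness. Each vertex $v$ of $\Gamma$ corresponds to a chord of $C(\Gamma)$ linked with an odd number of others. I would assign a $\Z_{2}$-weight to the strata of $K$ --- e.g.\ to its double lines or to components of the smooth part --- and track how each elimination step changes this weight. The goal is to show that the total weight is controlled by the sum of linking parities along $\partial K$, which by the oddness assumption is a fixed quantity forcing the local obstructions to cancel globally. This type of parity accounting is standard in free knot theory (cf.\ \cite{ManturovParity}) and is what converts a non-constructive existence statement about elementary complexes into an actual algorithm: an odd chord can always be isolated and its vertex of $\Gamma$ used as a receptacle to which a cusp can be pushed.

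The main obstacle, as I expect it, is the triple-point removal. Cusps are essentially one-dimensional objects and interior double lines admit direct surgeries, but at a triple point three sheets meet and the three local resolutions are not interchangeable without rippling consequences through the rest of $K$. Proving that, under the oddness hypothesis, at least one resolution is always available without increasing the genus or creating a new obstruction is the heart of the matter, and it is precisely where oddness is indispensable: for a generic chord diagram one can exhibit triple points that are unavoidable in every genus-$0$ filling, so any argument has to use the hypothesis in a genuinely global way rather than triple point by triple point.
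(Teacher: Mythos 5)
Your proposal correctly isolates the nontrivial direction, but it stops short of the two ideas that actually carry the paper's proof, and the plan as stated would not go through. First, the paper does not ``surger interior double lines'' as an obviously available move: the whole point of the F-lemma (Lemma \ref{F_old}) is that for a genus-$0$ complex at least one of the two smoothings of an interior double line again has genus $0$ --- a priori a smoothing can be ``bad'' (disconnect the preimage surface or raise its genus), so your step (iii), which you treat as routine, is exactly the statement that needs proof and is supplied in \cite{FM}. Second, and more importantly, the paper never removes triple points or cusps by local resolutions at all, so the obstacle you flag as ``the heart of the matter'' simply does not arise in its argument. One first kills all interior double lines by iterating the F-lemma (this strictly decreases the count of interior lines plus triple points), and then parity finishes the job by showing that in the resulting complex cusps and triple points \emph{cannot exist}: a double line joining a crossing of $\Gamma$ to a cusp forces that crossing to be even, and Lemma \ref{fmpa} says that at any triple point the parities of the three crossings reached by its double lines sum to $0$ in $\Z_{2}$, which is impossible when every crossing is odd. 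So the role of oddness is the opposite of what you propose --- an odd vertex of $\Gamma$ can never serve as a ``receptacle'' for a cusp, since the existence of such a cusp-to-crossing double line is precisely what oddness forbids; and the argument is local, triple point by triple point, via the extension of the Gaussian parity to double lines, not a global $\Z_{2}$-weight accounting.

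Concretely, then, your write-up has a genuine gap on both fronts: the genus-preserving smoothing of interior double lines is asserted rather than proved (this is the F-lemma), and the elimination of triple points and cusps is left open, with the suggested mechanism (perturbing triple points first, sliding cusps to odd vertices) incompatible with the parity constraints that make the theorem true. To repair it along the paper's lines, reverse the order of operations --- smooth interior double lines first, keeping genus $0$ by Lemma \ref{F_old} --- and then invoke the two parity lemmas to conclude that the surviving complex has no cusps and no triple points, i.e.\ is elementary.
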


By definition, both these conditions are equivalent to $g_{0}(\Gamma)=0$ and $g'(\Gamma)=0$.

This theorem is important for it reduces a ``dynamical'' problem of sliceness to a ``statical'' combinatorial problem which can be solved by a finite check of the graph $\Gamma$.

In the present paper, we investigate interrelations between various sliceness conditions and various genera

Recall, that there are two types of double lines on a complex $K$ with boundary $\Gamma$: interior and boundary. That naturally leads to three types of triple points of such a complex: {\em interior} triple points (incident to three interior double lines), {\em exterior} double lines (incident to three boundary double lines) and {\em mixed} triple points.

for framed $4$-graphs having one or many components.

In particular, we shall prove the following theorem.

\begin{thm}
Let $\Gamma$ be a $1$-component {\em iteratively odd} framed $4$-graph. It is slice if and only if it is slice with only exterior triple points present (that is, with no cusps, interior or mixed triple points).
%$$g(\Gamma)=0\Longleftrightarrow g_{el}(\Gamma)=0.$$ \label{th2}
\label{th2}
\end{thm}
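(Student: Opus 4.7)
The plan is to prove the nontrivial implication: if $\Gamma$ admits any spanning complex, it admits one with no cusps, no interior triple points, and no mixed triple points. The reverse direction is immediate, since requiring ``only exterior triple points'' is a restriction on the class of admissible complexes, and any such complex is \emph{a fortiori} a general spanning complex. I would proceed by induction on the depth of iterated oddness of $\Gamma$: the iteratively odd hypothesis provides a sequence $\Gamma = \Gamma_0 \to \Gamma_1 \to \cdots \to \Gamma_N = \emptyset$ in which $\Gamma_{i+1}$ is obtained from $\Gamma_i$ by smoothing a chord that is odd in the chord diagram $C(\Gamma_i)$. The base case, where $\Gamma$ itself is odd, reduces directly to Theorem~\ref{th1}: it furnishes an elementary spanning complex, which has no cusps and no triple points at all, trivially satisfying the conclusion.

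For the inductive step, fix a chord $c$ of $\Gamma$ that is odd in $C(\Gamma)$ and let $\Gamma'$ be the result of smoothing $c$. Then $\Gamma'$ is iteratively odd of smaller depth; after checking that $\Gamma'$ is slice (by restricting the original spanning complex for $\Gamma$ to the complement of a small neighbourhood of $c$), the inductive hypothesis supplies a spanning complex $K'$ for $\Gamma'$ with only exterior triple points. The task is then to ``re-insert'' the crossing $c$ into $K'$: in a neighbourhood of $c$ one performs the local gluing from the proof of Theorem~\ref{th1}, pairing up strands on the two arcs of $C(\Gamma)$ separated by $c$ via the odd linking pattern of $c$. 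This introduces a new family of boundary double lines all having $c$ as one endpoint, together with possibly some new triple points among them; crucially, each such new triple point is incident only to boundary double lines and is therefore exterior in the sense of the definition preceding Theorem~\ref{th2}.

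The main obstacle is ensuring that the local re-insertion at $c$ does not interact badly with $K'$: in principle a new boundary double line emanating from $c$ could cross an interior double line of $K'$ and create a forbidden mixed triple point, or terminate in a cusp. The strategy is to realize the odd-linkage surgery of Theorem~\ref{th1} entirely in a collar neighbourhood of $\partial K'$, where $K'$ consists only of boundary double lines; then any intersection produced by the re-insertion is automatically between two boundary double lines, yielding an exterior triple point. The delicate step is to verify that the $\Z/2$-parity cancellation underlying Theorem~\ref{th1}, carried out \emph{relative to} the chords already smoothed at earlier stages, continues to close up at each level of the recursion, so that after all $N$ steps are unwound no cusp and no interior or mixed triple point survives. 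Arguing that this relative parity argument really does persist through the iteration is the step I would expect to dominate the proof.
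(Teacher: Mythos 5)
Your inductive scheme has two gaps that I do not see how to close. First, the inductive step starts from the claim that $\Gamma'$, obtained from the slice graph $\Gamma$ by smoothing an odd chord $c$, is again slice, ``by restricting the original spanning complex for $\Gamma$ to the complement of a small neighbourhood of $c$''. This does not produce a genus-zero spanning complex for $\Gamma'$: in a complex $K$ with $\partial K=\Gamma$ the crossing $c$ is in general an endpoint of a boundary double line of $K$, and deleting a neighbourhood of $c$ yields a complex whose boundary is not $\Gamma'$ and which need not have genus zero. Sliceness is not inherited by smoothings of crossings in general; indeed, the last lemma of the paper is devoted precisely to the special situation (a double line joining the crossing to a cusp) in which one of the two smoothings of one crossing of a slice knot can be shown to be slice. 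Note also that iterated oddness is defined through the parity projection, which deletes \emph{all} odd chords simultaneously; after smoothing a single odd chord the parities of the remaining chords change, so ``iteratively odd of smaller depth'' for your $\Gamma_{i+1}$ is not automatic either. Second, the ``re-insertion'' step invokes ``the local gluing from the proof of Theorem~\ref{th1}'', but Theorem~\ref{th1} is not proved by a local gluing or pairing construction that could be replayed in a collar of $\partial K'$: it is proved by descent via the F-lemma~\ref{F_old} (smoothing interior double lines while keeping genus zero) together with the parity count at triple points (Lemma~\ref{fmpa}), which rules out triple points when all crossings are odd. So there is no construction available to re-insert $c$, and even granting one, nothing in your argument excludes new cusps or controls where the new boundary double lines terminate, which is exactly the content that the theorem requires.

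The intended argument is much shorter and does not induct on the graph at all: take any genus-zero complex $K$ with $\partial K=\Gamma$ and apply the F-lemma repeatedly to remove every interior double line, keeping the boundary fixed and the genus equal to zero. With no interior double lines there can be no interior or mixed triple points. The only cusps that could remain would be joined by boundary double lines to crossings of $\Gamma$; but a crossing connected to a cusp is stably even (Lemma~\ref{stably}, which is where the parity projection really enters --- applied to the complex, in dimension two, not to the graph), and an iteratively odd graph has no stably even crossings. Hence no cusps survive, and the resulting complex has only exterior triple points. If you want to salvage your approach, the missing ingredients are precisely a proof that sliceness descends to the projected graph and a genuine re-insertion construction; both are replaced in the paper by working with the two-dimensional parity on the complex itself.
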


Here by {\em iteratively odd} we mean the following. Consider a $1$-component framed $4$-graph $\Gamma$ and the corresponding
chord diagram $C(D)$. The chord deletion operation on $C(D)$ corresponds to ``ungluing'' operation for the corresponding
crossings. Hence, we can consider the graph $\Gamma^{1}=\Gamma'$ obtained by deleting all odd crossings of $\Gamma$.
By definition $\Gamma$ is odd if and only if $\Gamma'$ has is the circle without crossings. Some even crossings of
$\Gamma$ can become odd for $\Gamma'$. Hence, we can define $\Gamma^{2}= (\Gamma^{1})'$ and, iteratively, $\Gamma^{n}=(\Gamma^{n-1})'$.
We say that $\Gamma$ is {\em iteratively odd of order $k$} if $\Gamma^{k}$ has no crossings. $\Gamma$ is iteratively odd
if there exists a positive integer $k$ such that $\Gamma$ is iteratively odd of order $k$.

Respectively, we say that a crossing $c$ of $\Gamma$ has order $k$ if it disappears in $\Gamma^{k+1}$: odd crossings
have order $0$, etc. If $\Gamma$ is not iteratively odd and $\Gamma^{m}=\Gamma^{m+1}$ has nonempty set
of crossings, then all crossings of $\Gamma$ which persist of $\Gamma^{m}=\Gamma^{m+1}$ are called 
{\em stably even}.

This operation $\Gamma \to \Gamma'$ is called the {\em parity projection} (see \cite{ManturovParity, FMParity}).

The paper is organized as follows. We discuss the main ingredients of our proofs and prove them one-by-one.
The main ingredient is the {\em F-lemma} \ref{F_old} (see \cite{FM}), which states that if we have a complex $K$
of genus zero with some boundary $\Gamma$, then we can perform some ``smoothing operations'' at double lines,
cusps and triple points without touching the boundary leaving the genus of the complex unchanged.
The F-lemma is discussed in Section 2 of the paper. Section 3 is devoted to the proof of the Theorem \ref{th2} on the interatively odd knots. Section 4 deals with multi-component cases. In Section 5 we discuss the generalization \ref{F_general} of the F-lemma. In Section 6 further developments in the research area are presented.

%This operation $\Gamma\to \Gamma'$ is called {\em the parity projection} and one of the

\section{The F-lemma}

\begin{dfn}
We say that a standard complex $K'$ (with or without boundary) is a {\em smoothing} of a complex $K$ if 
the neighbourhood of $\partial K$ coincides with the neighbourhood of $\partial K$, and $K'$ is obtained by smoothings of some interior double lines of the complex $K$ (see \cite{FM})

We adopt the following notation: if $K'$ is a smoothing of $K$ we write $K'\prec K$; if $K'$ is obtained form $K$ by smoothing of exactly one double line we write $K'<K$.  
\end{dfn}

The F-lemma was given in \cite{FM} in the following way:

\begin{lm}[F-Lemma, \cite{FM}]
For each standard complex $K$ of genus $0$ %with at least one interior double line and 
with $\partial K=\Gamma$, there exists a {\em smoothing} $K'<K$ of genus $0$.
\label{F_old}
\end{lm}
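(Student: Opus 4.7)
The plan is to pick an interior double line $\ell$ of $K$ and show that at least one of its two local smoothings preserves genus $0$. Since the genus of $K$ is defined as the genus of its preimage surface $\Sigma$, and $K$ being of genus $0$ forces $\Sigma$ to be a disjoint union of planar pieces (one disk per unicursal component of $\Gamma=\partial K$), the question reduces to an analysis of how the surgery on $\Sigma$ induced by smoothing $\ell$ affects the topology of $\Sigma$.

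The central observation is that smoothing an interior double line corresponds, at the preimage level, to a surgery on $\Sigma$ along the pair of preimage curves $f^{-1}(\ell)$. Concretely, if $\ell$ is cyclic then $f^{-1}(\ell)=C_{1}\sqcup C_{2}$ is a pair of disjoint circles in $\Sigma$, and the surgery consists of cutting $\Sigma$ along both circles and reglueing the four resulting boundary components in pairs; there are exactly two such pairings, and they are in bijection with the two possible smoothings of $\ell$. The case of a double line between two cusps is entirely analogous, with the preimage being a pair of arcs joining the two cusp preimages instead of a pair of circles. In either case the Euler characteristic of $\Sigma$ is unchanged under either surgery, so the choice of smoothing only affects the number of connected components and the total genus.

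Planarity of $\Sigma$ is what makes the dichotomy work out in our favour. Two disjoint circles on a disk are automatically nested by the Jordan curve theorem, and a pair of arcs joining two cusp preimages cobounds a well-defined planar subregion; in both settings the pair of preimage curves therefore sits in a ``planar'' configuration. Of the two possible reglueings, one splits the ambient planar surface into pieces that remain planar, while the other joins cut boundaries into a handle and raises the genus to $1$. One simply reads off from the planar embedding which of the two smoothings is the planarity-preserving one and chooses that one; the resulting $\Sigma'$ is again a disjoint union of disks, so $K'$ has genus $0$.

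The main obstacle is verifying that the chosen smoothing actually yields a valid standard complex with the same boundary $\Gamma$. An interior cyclic double line may pass through interior triple points, and the endpoints of a double line between two cusps must remain consistent with the local cusp geometry after the smoothing. One must therefore pin down the local model near these singular loci and check that the planarity-preserving resolution is compatible with them. If the naive resolution at a particular $\ell$ is obstructed by adjacent triple points, the remedy is to choose a different interior double line; an extremal argument (e.g.\ choosing $\ell$ to minimise the number of triple points it meets, or choosing a triple-point-free cyclic component when one is present) guarantees that a workable choice always exists. This local/global interaction between the combinatorics of singular loci and the surgery on $\Sigma$ is the delicate point of the proof.
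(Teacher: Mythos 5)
Your overall strategy --- read the smoothing as a cut-and-reglue surgery on the preimage surface $\Sigma$ and pick the resolution that keeps the pieces planar --- is in the same spirit as the paper's treatment: the paper does not spell the proof out (it says the lemma ``can be proved via a regluing process'' and refers to \cite{FM}), but in Section 5 it rederives the F-lemma from the Generalized F-lemma \ref{F_general}: smoothing a double line never decreases the Euler characteristic, and one of the smoothings keeps the result connected, so a genus-$0$ complex stays genus $0$. Measured against that, your write-up has concrete gaps. First, the claim that two disjoint circles in a disk are ``automatically nested by the Jordan curve theorem'' is false: they may lie side by side. The unnested case can still be salvaged (one pairing caps the two holes of the resulting pair of pants and returns a disk, the other produces a handle and a closed component), but your dichotomy is derived from a wrong picture, and the same haste shows in the cusp case: the preimage of a double line joining two cusps is a single closed curve through the two branch points (your ``pair of arcs'' share their endpoints), and the effect of its smoothing on $\chi$ is not the verbatim computation of the cyclic case --- note that Lemma \ref{F_general} only asserts the inequality $\chi(K)\le\chi(K')$, precisely because equality is not automatic.

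Second, you implicitly assume that the preimage of a cyclic double line is two circles, each mapped homeomorphically; it can instead be a single circle double covering the double line (the two sheets are interchanged by the monodromy along it), and then the ``two pairings of four boundary circles'' picture does not apply and one must analyse separately which smoothings exist and what surgery they induce on $\Sigma$. Relatedly, you never check that the planarity-preserving regluing is one of the regluings actually realized by a smoothing of $K$ (the gluing maps are dictated by the local resolution, not chosen freely), nor that the outcome is a legitimate spanning surface, i.e.\ one component, with the correct boundary, per unicursal component of $\Gamma$ --- in the ``bad'' regluings above one typically creates closed components, which must be excluded. Finally, you yourself flag the interaction of the chosen double line with triple points and cusps as ``the delicate point'' and offer only an unspecified extremal choice of $\ell$; as written this is a statement of hope rather than an argument (in fact smoothing a double line through a triple point is unproblematic and kills the triple point, but that needs to be said and checked, not deferred). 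The paper's route --- Euler-characteristic monotonicity plus choosing the smoothing that preserves connectivity --- closes all of these holes at once and avoids the nested/unnested case analysis; to make your version complete you would have to supply the missing cases and the realizability check explicitly.
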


The lemma can be proved via a regluing process. It allows one to prove Theorem \ref{th1}: starting with a standard complex $K$ of genus $0$
with $\partial K=\Gamma$, we undo the double lines of $K$ by smoothing and get a complex with a smaller number of
 $\#\{\mbox{interior lines}\}+\#\{\mbox{triple points}\}$; iterating this process,
we can get rid of all interior double lines. 

The parity used in the present paper is the {\em Gaussian parity} (see \cite{ManturovParity} for the initial definition and \cite{FMParity} for the definition in case of 2-knots). Note, that in the present paper we always mean free 2-knots, free surface knots and free 2- (or surface) links.

A simple observation shows that the following statement holds (see \cite{FMParity}):
\begin{lm} 
Let $\Gamma$ be framed 4-graph and $K$ be its spanning complex. Then if a double line connects a crossing $c$ of $\Gamma$ to 
a cusp, then $c$ is even.
\end{lm}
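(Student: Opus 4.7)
The plan is to lift the given double line from $c$ to the cusp into a single arc on the spanning surface $\Sigma$, and then use this arc to read off the Gaussian parity of $c$ directly from the boundary combinatorics of $\Sigma$.

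Let $\ell\subset K$ be the given double line, with endpoints at the crossing $c$ and at the cusp $t$. Its preimage $f^{-1}(\ell)$ consists of two arcs in $\Sigma$. Because $f$ is one-to-one at a cusp, both arcs share the single common endpoint $\tilde t := f^{-1}(t)$ in the interior of $\Sigma$, while their remaining endpoints are the two preimages $c_1,c_2\in\partial\Sigma$ of $c$. Concatenating them through $\tilde t$ produces a single arc $\alpha\subset\Sigma$ from $c_1$ to $c_2$. The crucial point is that one obtains a \emph{single} arc rather than two disjoint ones; this is exactly what distinguishes a cusp endpoint of $\ell$ from a crossing endpoint, and had $\ell$ instead joined $c$ to a second crossing of $\Gamma$, the preimage would consist of two disjoint arcs and the following reduction would fail.

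The points $c_1,c_2$ split the boundary component of $\partial\Sigma$ containing them into two arcs $A^+$ and $A^-$. By the definition of Gaussian linking, another chord $c'$ of the chord diagram of $\Gamma$ is linked with $c$ if and only if exactly one of its two preimages on $\partial\Sigma$ lies in $A^+$. Hence the number of chords linked with $c$ has the same parity as the total number of crossing-preimages lying in $A^+$, and it suffices to show that this number is even.

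For this I would consider the $1$-complex $D\subset\Sigma$ formed by the preimages of all double lines of $K$, of which $\alpha$ is itself a subgraph. Genericity of the map $f$ forbids $4$-to-$1$ points, so distinct double-line arcs in $\Sigma$ meet only at cusp-preimages or triple-point preimages, and at each such interior vertex the local degree of $D$ equals $2$. Cutting $\Sigma$ along $\alpha$ and applying the handshake lemma to the portion of $D$ lying on the side containing $A^+$, the interior contributions are all even, and together with the parity input from $\alpha$'s endpoints $c_1,c_2,\tilde t$, this forces the number of crossing-preimages on $A^+$ to be even. The main obstacle is the careful combinatorial bookkeeping in this last step, in particular the accounting of how triple points and multiple double lines incident to the same crossing enter the count; this is precisely the content of the parity observation recorded in \cite{FMParity}.
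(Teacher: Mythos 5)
The paper itself gives no argument for this lemma --- it is labelled ``a simple observation'' and deferred to \cite{FMParity} --- so there is no in-text proof to compare against; your lift-to-the-spanning-surface argument is the standard one and its overall structure is sound. The key insight is correctly identified: a cusp endpoint has a single preimage, so the two preimage arcs of $\ell$ concatenate into one arc $\alpha$ from $c_1$ to $c_2$, whereas a crossing endpoint would leave two disjoint arcs; and the parity of $c$ reduces to the parity of the number of crossing-preimages on one of the two boundary arcs cut off by $c_1,c_2$.

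Three points need attention before this is a complete proof. First, your phrase ``the portion of $D$ lying on the side containing $A^+$'' presupposes that $\alpha$ separates $\Sigma$; this holds when $\Sigma$ is a disc (genus $0$), which is the standing hypothesis throughout this section of the paper (cf.\ the remark that the Gaussian parity extends to double lines only for standard complexes of genus $0$), but it should be stated, since for positive genus $\alpha$ need not separate and the argument as written fails. Second, the local degree of $D$ at a triple-point preimage is $4$, not $2$ (two preimage arcs crossing transversally); only evenness is needed, but the claim as written is wrong. Third --- and this is the real content you leave to ``bookkeeping'' --- after cutting along $\alpha$, the handshake lemma applied to $(D\setminus\alpha)\cap\Sigma^+$ yields
$$\#\{\mbox{crossing-preimages on }A^+\}\;+\;\#\bigl((D\setminus\alpha)\cap\alpha\bigr)\;\equiv\;0 \pmod 2,$$
so you still must show the second summand is even. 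This follows because the only transversal intersections of $\alpha$ with the rest of $D$ occur at preimages of triple points lying on $\ell$, and each such triple point contributes exactly two of them (one on each of the two sheets of $K$ containing $\ell$); the endpoints $c_1,c_2,\tilde t$ contribute nothing since exactly one double line emanates from a crossing of $\Gamma$ and from a cusp. With these repairs the argument is correct and is, as you suspect, precisely the proof recorded in \cite{FMParity}.
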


This means that all double lines of the resulting complex with boundary connect double points of $\Gamma$ to double points of $\Gamma$.

%Theorem \ref{th1} now follows from the observation that there can be no triple points between such (odd) double lines.

%We formulate a bit stronger result   

Another observation leads to the lemma:
\begin{lm}[\cite{FMParity}]
Let $K$ be a standard complex with boundary $\partial K=\Gamma$. Let $t$ be a triple point of $K$
such that all double lines are connected to crossings $c_{1},c_{2},c_{3}$, respectively. Then the sum of parities
of these crossings is even modulo $2$.
\label{fmpa}
\end{lm}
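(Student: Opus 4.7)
The strategy is to reduce this $2$-dimensional parity statement to the standard $\Omega_3$-invariance of Gaussian parity. Near the triple point $t$ the complex $K$ is modelled on three transverse sheets $S_1, S_2, S_3$ meeting pairwise along arcs $D_1 = S_2\cap S_3$, $D_2 = S_1\cap S_3$, $D_3 = S_1\cap S_2$, all converging at $t$. Pulling this picture back to the pre-image surface $\Sigma$ produces a tripod at the pre-image of $t$. By hypothesis each $D_i$ reaches the boundary at a crossing $c_i$ of $\Gamma$, so following the double lines from $t$ outward extends the tripod's six half-arcs to paths in $\Sigma$ terminating at six marked points on the source circles --- two per chord $c_i$.

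The central step is to verify that these six endpoints appear on the source circles in the cyclic pattern characteristic of three chords participating in an $\Omega_3$ move. The six half-arcs leave $t$ in an order determined by the transverse intersection of the sheets (each pair $S_i,S_j$ contributes exactly one double line and hence two adjacent half-arcs in the tripod), and this order is transported unchanged along the smooth, boundary-free interior of each $D_i$; the standard-complex hypothesis on $K$ rules out cusps or additional triple points interfering along the way, so the local data at $t$ propagates directly to a global configuration of the six points on $\partial\Sigma$.

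With the configuration identified, the parity computation is immediate: using
\[
p(c_1)+p(c_2)+p(c_3)\equiv\sum_{d\ne c_1,c_2,c_3}\bigl(\mathrm{lk}(d,c_1)+\mathrm{lk}(d,c_2)+\mathrm{lk}(d,c_3)\bigr)\pmod 2
\]
(the three pairwise linkings $\mathrm{lk}(c_i,c_j)$ each contribute to two of the summands and cancel), one checks that in the $\Omega_3$-pattern any other chord $d$ is linked with an even number of $c_1, c_2, c_3$: its two endpoints either avoid the arcs cut out by the tripod configuration, or they straddle pairs of arcs which are symmetric with respect to two of the three chords. The main obstacle is the combinatorial bookkeeping in the central step: carefully matching the local tripod structure at $t$ with the global cyclic order of endpoints on $\partial\Sigma$ and ruling out any reshuffling along the double lines, which ultimately relies on the transversality and standard-complex assumptions imposed on $K$.
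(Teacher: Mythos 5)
There is a genuine gap in the central step. You claim that the six endpoints of the chords $c_1,c_2,c_3$ appear on the source circle in ``the cyclic pattern characteristic of an $\Omega_3$ move'', and you justify this by saying that the local departure order of the double-line branches at $t$ is ``transported unchanged'' along the double lines. But the local order in which branches leave $t$ controls nothing about the \emph{global} cyclic order of their endpoints on $\partial\Sigma$, and in particular nothing about how the endpoints of \emph{other} chords interleave with them: the double lines $D_1,D_2,D_3$ may be arbitrarily long and other boundary double lines may terminate anywhere between the six points. The $\Omega_3$ configuration (three adjacent pairs of endpoints with no other chord ends in between) is a smallness condition on a triangle in a diagram; it simply does not follow from the existence of a triple point, and the chord-by-chord claim you derive from it --- that every external chord $d$ is linked with evenly many of $c_1,c_2,c_3$ --- is strictly stronger than the lemma and false in general (the lemma only needs the \emph{total} count over all $d$ to be even, and odd contributions of individual chords can and do occur, cancelling in pairs). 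A secondary inaccuracy: the preimage of a triple point in $\Sigma$ is not a single tripod but three points, one on each sheet, each of which is a transverse crossing of two preimage arcs of double lines; the local combinatorics you set up is therefore already not the right one.

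The paper's own argument is of a different nature and is essentially a citation of the parity theory of \cite{FMParity}: the Gaussian parity of crossings of $\Gamma$ extends to a parity of the double lines of a standard complex of genus $0$ (defined by mod~$2$ intersection counts of the preimage arcs inside the spanning surface), this parity of a boundary double line agrees with the Gaussian parity of the crossing at which it ends, and at every triple point the parities of the incident double lines sum to $0 \bmod 2$; the lemma is then immediate. To repair your proof you would have to reproduce this global mod~$2$ intersection argument in the preimage disc (where the genus-$0$ hypothesis enters), rather than argue from the local picture at $t$.
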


It follows from the fact that at every triple point the sum of the parities of the double lines, incident to it, equals $0 \mod 2$.

This means that if double lines originate from a triple point and come to some crossings, then either all 
of these crossings should be even or there should be one even and two odd crossings. 

In our situation when we have only odd crossings, such triple points are impossible. That completes
the proof of Theorem \ref{th1}.

The reason why Lemma \ref{fmpa} holds originates from the possibility to extend the Gaussian parity from
vertices of graphs to double lines of standard complexes of genus $0$. Note that in case of non-Gaussian parity that statement is not necessarily true.

\section{Parity for dimension $2$ and genus $0$} 

Now let us prove Theorem \ref{th2}.

To this end, we should recall the notion of {\em parity} and {\em parity projection} from \cite{ManturovCobordism,FMParity}.

%Given a slice framed $4$-graph $\Gamma=\partial K$, we can associate $0$ and $1$ to all double lines of
%$K$ as follows:

%{\bf Write down.}

A crucial property of this parity we shall need, is the following projection lemma: 
\begin{lm}
For a complex $K$ with $\partial K=\Gamma$, there is a complex $K'$ with 
$\partial K'=\Gamma'$ such that $K$ is obtained from $K'$ which is {\em smaller} than the complex $K$. 
\end{lm}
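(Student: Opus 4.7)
The plan is to construct $K'$ from $K$ by eliminating from the boundary all odd crossings of $\Gamma$ via controlled smoothings of the boundary double lines that meet them. After the procedure, the new boundary will be exactly the parity projection $\Gamma'$, while the complex itself will have strictly fewer interior double lines and triple points than $K$, so that $K'$ is \emph{smaller} than $K$ in the same combinatorial sense that is used when iterating the F-Lemma.

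First I would examine, for each odd crossing $c$ of $\Gamma$, the two boundary double lines of $K$ emanating from $c$. By the lemma asserting that a double line joining a crossing to a cusp forces the crossing to be even, neither of these double lines can terminate at a cusp: each either continues to another crossing of $\Gamma$ or enters a triple point. Moreover, by Lemma \ref{fmpa}, the parities of the three terminal crossings at a triple point sum to zero modulo $2$, so odd terminal crossings never appear in isolation at a triple point. This pairing of odd crossings along boundary double lines and at triple points is the combinatorial scaffolding that makes the construction possible.

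Next I would smooth, using a variant of the regluing construction from the proof of Lemma \ref{F_old}, the boundary double lines incident to odd crossings. Smoothing a boundary double line joining two crossings ungluues both endpoints and thus removes them from the boundary; at a triple point, the parity-matching from Lemma \ref{fmpa} allows the three incident resolutions to be combined coherently, removing the odd terminal crossings without leaving inadmissible local configurations. Iterating across all odd crossings produces a complex $K'$ whose boundary is the parity projection $\Gamma'$, and from which the original $K$ can be recovered by reinserting the deleted boundary crossings together with their associated double lines.

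The main obstacle I expect is ensuring that the boundary smoothings are compatible at \emph{mixed} triple points, where interior and boundary double lines coexist: here one must simultaneously resolve several double lines without producing forbidden features (for instance, a new double line running from a remaining boundary crossing to a cusp, which would contradict the parity constraint on endpoints of double lines). Verifying that the local pieces fit together into a globally well-defined standard complex with the claimed boundary, and that the counts of interior double lines and triple points genuinely decrease at each step, is the technical core of the argument.
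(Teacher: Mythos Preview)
The paper does not supply its own proof of this projection lemma; it is recalled from \cite{ManturovCobordism,FMParity}, and the surrounding text only explains what ``smaller'' means (the lexicographic triple $(\#\text{triple points},\#\text{cusps},\#\text{double lines})$). The one substantive hint the paper gives is the sentence just before Section~3: Gaussian parity extends from vertices of $\Gamma$ to \emph{all} double lines of a genus-$0$ standard complex. That extension is the organizing principle behind the construction in the references --- one labels every double line of $K$ odd or even and then projects out the odd ones globally; the boundary automatically becomes $\Gamma'$ and the complexity drops.

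Your outline is in the right spirit but has two concrete defects. First, a boundary crossing $c$ of $\Gamma$ is the endpoint of exactly \emph{one} double line of $K$, not two: $c$ is a single transverse double point of the boundary curve, and the two local sheets meeting there continue into $K$ along a single intersection arc. So the local picture you base the argument on (``the two boundary double lines of $K$ emanating from $c$'') is wrong, and with it the subsequent case analysis. Second, by working outward from odd boundary crossings rather than assigning parities to all double lines, you have no control over what to do at a triple point involving a double line that never reaches $\partial K$; Lemma~\ref{fmpa} only tells you the parities sum to $0$ once such parities are defined, it does not by itself produce a coherent smoothing from boundary data alone. Relatedly, your claimed complexity drop (``strictly fewer \emph{interior} double lines'') does not match what your procedure actually does, since you are smoothing boundary double lines. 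Once you adopt the global parity on double lines, the compatibility at triple points, the fact that cusp-ending lines are even and hence survive, and the decrease in complexity all follow directly.
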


Here saying ``smaller'' we mean the following. Consider a {\em complexity} $\mathcal{C(K)}$ of a complex $K$ --- the triplet (\# triple points, \# cusps, \# double lines). Those triplets are ordered lexicographically. Then a complex $K_1$ is {\em smaller} than a complex $K_2$ if $\mathcal{C}(K_1)<\mathcal{C}(K_2)$.

In particular, if a cusp point $w$ of $K$ was connected to a crossing $c$ of $K$,
then this cusp point will persist in $K'=K^{1}, K^{2}, \dots$.

Iterating this process, we get
\begin{lm}
Let $\Gamma$ be a slice framed $4$-graph, $\Gamma=\partial K$. Then each crossing
$c$ of $\Gamma$ which is connected to a cusp is stably even.
\label{stably}
\end{lm}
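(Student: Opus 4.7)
\medskip

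\noindent\textbf{Proof plan.} The strategy is a straightforward induction on the order of the parity projection, using the projection lemma stated just above together with the two already-recorded ``local'' observations: (i) a double line joining a crossing of $\Gamma$ to a cusp forces that crossing to be even, and (ii) the cusp connected to a given crossing persists under the projection $K\mapsto K^{1},K^{2},\ldots$. The goal is to show that the crossing $c$ survives every stage $\Gamma\mapsto\Gamma'$, i.e.\ is never removed by the parity projection; this is precisely the assertion that $c$ is stably even.

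\medskip

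\noindent\textbf{Base step.} Since $\Gamma=\partial K$ is slice, we may take $K$ to be a standard complex of genus $0$. By hypothesis the crossing $c$ of $\Gamma$ is connected by a double line to a cusp of $K$. The parity lemma recalled above (``if a double line connects a crossing $c$ of $\Gamma$ to a cusp, then $c$ is even'') immediately gives that $c$ is even in $\Gamma$, and therefore $c$ survives the first parity projection: it is a crossing of $\Gamma^{1}=\Gamma'$.

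\medskip

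\noindent\textbf{Inductive step.} Suppose $c$ has survived as a crossing of $\Gamma^{n}$. Apply the projection lemma to the complex $K^{n}$ spanning $\Gamma^{n}$: we obtain a strictly smaller standard complex $K^{n+1}$ with $\partial K^{n+1}=\Gamma^{n+1}$. Since this process only decreases the complexity $\mathcal{C}$ and we started from a genus-zero complex, no new genus can be introduced along the chain, so $K^{n+1}$ is still a spanning complex of genus $0$ (i.e.\ $\Gamma^{n+1}$ is slice). By the cusp-persistence observation, the cusp originally attached to $c$ in $K$ persists in $K^{n+1}$ and is still joined to $c$ by a double line. Applying the base-step lemma inside $K^{n+1}$ once more gives that $c$ is even as a crossing of $\Gamma^{n+1}$, so it is not removed by the $(n+2)$-nd parity projection. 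Hence $c$ appears in $\Gamma^{m}$ for every $m$, which by definition means $c$ is stably even.

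\medskip

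\noindent\textbf{Main obstacle.} The delicate point is the bookkeeping in the inductive step: one must verify that at every iteration the cusp that originally witnessed the evenness of $c$ is still present \emph{and} still joined to $c$ by a double line (so that the local parity lemma can be reapplied) — this is exactly what the cusp-persistence statement following the projection lemma guarantees, provided the complexes $K^{n}$ are produced by the projection procedure rather than chosen arbitrarily. The second routine check is that sliceness is preserved along the chain $K\succ K^{1}\succ K^{2}\succ\cdots$; this follows because each $K^{n+1}$ is obtained from $K^{n}$ by the regluing/smoothing operations of the F-lemma type, which do not raise the genus. Once these two points are in place, the induction closes and the lemma follows.
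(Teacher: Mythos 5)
Your argument is essentially the paper's own: the paper derives the lemma by iterating the projection lemma, noting that a cusp connected to the crossing $c$ persists in $K^{1},K^{2},\dots$, and invoking at each stage the observation that a double line from a crossing to a cusp forces that crossing to be even, so $c$ survives every projection and is stably even. Your write-up just makes this induction (and the implicit preservation of genus $0$ along the chain) explicit, so it is correct and follows the same route.
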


Now we can prove Theorem \ref{th2}. Consider the slice iteratively odd graph $\Gamma$. Let $K$ be its spanning complex. Apply the F-lemma \ref{F_old} to the interior double lines of the spanning complex $K$. That gives us a spanning complex with equal or lower genus with no interior double lines and hence with no interior or mixed triple points. But it cannot have any cusps for all the interior cusps were deleted by the F-lemma and no cusps connected to boundary may exist due to Lemma \ref{stably}. That means that only exterior triple point may exist on such a complex. Theorem \ref{th2} is proved. \\

Theorem \ref{th2} has a simple

\begin{crl}
Consider a 1-component iteratively odd framed 4-graph $\Gamma$. It is elementary slice if and only if it is slice with no exterior triple points.
\end{crl}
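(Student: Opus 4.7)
The plan is to obtain the corollary as a direct consequence of Theorem \ref{th2} combined with the definition of elementary sliceness, without reprising the whole machinery from scratch. The forward implication ($\Rightarrow$) is essentially tautological: if $\Gamma$ is elementary slice, then by Definition \ref{genera} there is a genus-$0$ spanning complex with no cusps and no triple points whatsoever; in particular no exterior triple points occur, so $\Gamma$ is slice with no exterior triple points.

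For the reverse direction ($\Leftarrow$), I would start from a genus-$0$ spanning complex $K$ of $\Gamma$ that already contains no exterior triple points and then rerun the argument used for Theorem \ref{th2}. Concretely, apply the F-lemma \ref{F_old} iteratively to the interior double lines of $K$; the genus stays at $0$ throughout, and eventually no interior double lines remain. Since interior and mixed triple points are by definition incident to at least one interior double line, they all disappear in the process, and the same reduction eliminates every interior cusp. Lemma \ref{stably}, together with iterative oddness of $\Gamma$ (which forbids any crossing from being stably even), excludes the remaining possibility of a cusp attached to a boundary crossing. What distinguishes this argument from that of Theorem \ref{th2} is the extra standing hypothesis that exterior triple points are absent from the outset, so the final complex has no triple points and no cusps at all, i.e., it is an elementary spanning complex of genus $0$; hence $g_{el}(\Gamma)=0$.

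The only step that requires genuine bookkeeping is checking that the F-lemma smoothings do not create new exterior triple points. Because an exterior triple point is by definition incident exclusively to boundary double lines (those with an endpoint at a crossing of $\Gamma$), and because the smoothing operation acts on interior double lines while leaving the boundary structure of the complex untouched, no exterior triple point can be produced by the reduction; this is implicit in the local model of smoothings recalled from \cite{FM}. Once this point is verified, the corollary follows immediately from the Theorem \ref{th2} argument applied to $K$.
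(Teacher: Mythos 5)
The forward direction of your argument is fine, and your overall plan (rerun the proof of Theorem \ref{th2} starting from a genus-$0$ complex that has no exterior triple points, using Lemma \ref{F_old} and Lemma \ref{stably}) is the natural way to read the paper's unproved assertion that this is a ``simple'' corollary. The gap is exactly at the step you yourself single out and then dismiss: the claim that the F-lemma smoothings cannot create exterior triple points because they ``act on interior double lines while leaving the boundary structure untouched.'' That locality argument is not sound. When an interior double line $\ell$ carrying mixed triple points is smoothed, each such triple point is resolved, and at it the \emph{other two} double lines passing through the point are cut and cross-reglued (in the local model of three coordinate planes, smoothing $A\cap B$ turns the two crossing lines $A\cap C$, $B\cap C$ into the two branches of a hyperbola). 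This regluing changes the global decomposition of the double-point set into double lines, and in particular can change their type: e.g.\ a cyclic interior line can be absorbed into a line joining two crossings of $\Gamma$, i.e.\ become a boundary line. If that interior line also passed through another, distant, mixed triple point $t$ whose remaining two branches were already boundary lines, then after the smoothing all three branches at $t$ lie on boundary lines, so $t$ has become an \emph{exterior} triple point even though none existed before. Hence the invariant ``no exterior triple points'' is not automatically preserved along the reduction, and exterior triple points, once created, are never removed later (only interior lines are ever smoothed).

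So the reverse implication does not follow from what you wrote: you need a genuine argument at this point --- either showing that under iterative oddness such re-typing of double lines cannot produce exterior triple points (note that parity alone does not forbid exterior triple points here, which is precisely why Theorem \ref{th2} allows them), or a mechanism for dealing with exterior triple points created during the reduction. Since the paper itself gives no proof of the corollary, there is no authorial argument to fall back on; as it stands, your proposal proves the easy direction and reduces the hard direction to an unproved preservation claim whose stated justification is incorrect.
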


%Now, applying the F-lemma until possible to the spanning complex for a iteratively odd framed $4$-graph $\Gamma$,
%we get to a complex ${\tilde K}$ of genus $0$ for $\Gamma$ with only elementary double lines
%(i.e., those double lines which connect points in $\Gamma$ to points in $\Gamma$). Since there are no triple points with only elementary double lines present and no cusps due to Lemma \ref{stably}, the obtained spanning complex is elementary. That completes the proof of Theorem \ref{th2}.

%\section{Rasmussen's surface and Rushworth's surface}

%\begin{dfn}[Rasmussen surface]
%\end{dfn}

%\begin{dfn}[Rushworth surface]

%\end{dfn}

\section{The case of many components}

It turns out that Theorem 1 holds in the case of multicomponent links if we treat parity in another sense.

Every framed $4$-graph $\Gamma$ having more than one component can be considered as the image
of {\em several circles} $S^{1}\sqcup \cdots \sqcup S^{1}$; we shall distinguish between vertices of $\Gamma$ 
which appear by pasting a component of one $S^{1}$ with itself and those which appear by
pasting different circles $S^{1}$. We refer to the former as {\em pure crossings} and to the latter as
{\em mixed crossings}.

\begin{thm}
Let $\Gamma$ be a $2$-component framed $4$-graph such that all crossings are mixed. Then
$\Gamma$ is slice if and only if it is elementary slice.
\label{thm3}
\end{thm}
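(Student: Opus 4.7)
My plan is to mirror the proof of Theorem~\ref{th2} closely, replacing the Gaussian parity of crossings with a \emph{component parity} adapted to the two-component setting. The reverse implication ``elementary slice $\Rightarrow$ slice'' is immediate from the genus inequality $g(\Gamma) \le g_{el}(\Gamma)$, so I only need to show that $g(\Gamma) = 0$ forces $g_{el}(\Gamma)=0$. Fix a spanning complex $K$ of genus $0$ with $\partial K=\Gamma$ and apply the F-lemma \ref{F_old} iteratively to smooth every interior double line. The result is a spanning complex $K'$ of genus $0$ with $\partial K'=\Gamma$ whose double lines are all of boundary type. Since interior and mixed triple points each require at least one interior incident double line, both vanish automatically, leaving only exterior triple points and cusps to eliminate.

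The heart of the argument is a component-parity analog of Lemma~\ref{fmpa}. The preimage in $\Sigma$ of a double line $\ell$ of $K'$ consists of two local sheets, each carrying a label in $\{1,2\}$ recording which unicursal (= connected) component of $\Sigma$ it lies in. Define the component parity $p(\ell)\in\{0,1\}$ to be $1$ iff the two labels differ. At a boundary end of $\ell$ at a crossing $c$ of $\Gamma$, the two sheets correspond to the two strands of $\Gamma$ through $c$, so $p(\ell)=1$ iff $c$ is mixed. At a triple point three labeled sheets meet pairwise; among three elements of $\{1,2\}$ the number of unequal pairs is always $0$ or $2$, hence the sum of the component parities of the three incident double lines is even. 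For an exterior triple point whose three incident double lines end at three crossings $c_1,c_2,c_3\in\Gamma$, this gives $p(c_1)+p(c_2)+p(c_3)\equiv 0\pmod 2$. By hypothesis every $c_i$ is mixed, so the sum is $3\equiv 1\pmod 2$, a contradiction; hence $K'$ contains no exterior triple points.

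To dispose of cusps I would establish the component-parity version of Lemma~\ref{stably}: at a cusp the two sheets of $\Sigma$ form a single local Whitney-umbrella piece and therefore belong to the same component, so the incident double line has component parity $0$. Consequently, any boundary double line joining a cusp to a crossing $c$ forces $c$ to be pure, again contradicting the all-mixed hypothesis. The F-lemma sweep has already destroyed all double lines between two cusps, so no cusp can persist. At this stage $K'$ has no interior double lines, no cusps and no triple points; it is elementary of genus~$0$, giving $g_{el}(\Gamma)=0$ and proving Theorem~\ref{thm3}.

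The main obstacle I anticipate is a clean verification of the two local statements above --- that at any triple point the number of mixed incident double lines is even, and that any double line meeting a cusp is pure. Both are local facts about the singularities of the map $f\colon\Sigma\to K$, but they must be reconciled with the definition of standard complex so that the component label of a sheet is well-defined along the entire length of each double line. Once these local models are in hand the rest of the argument is a direct transplant of the proof of Theorem~\ref{th2}; in fact the combinatorics is simpler here because the component labels of sheets are a purely topological datum inherited from the decomposition $\Sigma=\Sigma_1\sqcup\Sigma_2$ rather than a derived parity on chords.
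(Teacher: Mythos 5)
Your endgame coincides with the paper's own argument: after the interior double lines are gone, the paper excludes cusps because a cusp is a self-intersection of a single component, so a boundary double line ending at a cusp would have to end at a \emph{pure} crossing of $\Gamma$, and it excludes triple points because with only two components one cannot have all three incident double lines mixed. Your ``component parity'' at cusps and triple points is exactly a formalization of this counting (three sheet labels in $\{1,2\}$ always give an even number of unequal pairs; the two sheets at a cusp merge and hence lie in one component), and that part of your proposal is correct and essentially identical to the paper.

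The genuine gap is at the first step: you invoke Lemma~\ref{F_old} verbatim for the spanning complex of a $2$-component graph, but that lemma is stated and proved for a single knotted disc/sphere, and extending it to $2$-links is precisely the point the paper must argue before the rest of the proof can run. When you smooth a \emph{mixed} interior double line you must produce a genus-$0$ complex that is still the image of two discs, one per unicursal component of $\Gamma$; a priori the smoothing supplied by the $1$-component F-lemma could merge the two components or split off a closed component, destroying the structure required of a spanning complex for a $2$-component graph. The paper smooths pure interior double lines componentwise and handles mixed ones by a connected-sum trick: form a connected sum of the two components to get a single $2$-knot, apply the F-lemma there, then remove the added handle, recovering a $2$-component genus-$0$ complex with the mixed double line smoothed. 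Your write-up needs this extension (or an equivalent direct check that one of the two resolutions of a mixed double line reglues the two discs into two discs) before ``apply the F-lemma iteratively'' is justified; once that is supplied, the remainder of your argument is the same as the paper's.
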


The definition of sliceness for such multicomponent framed $4$-valent graphs uses 
standard $2$-complexes which are images of {\em several} $2$-discs (one for each unicursal component of the graph $\Gamma$) having standard intersections.
%{\bf TO GIVE A COMPLETE DEFINITION}

%The idea is again to apply Fedoseev's lemma. The main reason why it is possible is because
%all double lines to smooth belong to the same component.

%{\bf ARE THERE ANY FUNCTORIALITY RESULTS ???}

\begin{proof} To prove this theorem we use the following arguments. Let $\Gamma$ be a slice 2-component framed 4-graph and $K$ its spanning complex. The complex $K$ is an image the union of two 2-discs $D_1, D_2$. The interior double lines of $K$ are of the two types: those corresponding to the intersections between $D_1$ with $D_2$ (mixed double lines) and corresponding to the self-intersections between $D_1$ or $D_2$ (pure double lines).

Note that the F-lemma applies to 2-links. Indeed, smoothing a pure double line of a 2-link yields the same result as smoothing of a double line on a 2-knot and thus transforms the component of the link into a new complex of genus 0. To smooth a mixed double line we use the following trick: 
consider a connected sum of the two components.
%connect the two components by a cylindric handle. 
That transforms the link into a 2-knot. Due to the F-lemma \ref{F_old} the smoothing of the double line on this complex produces a complex of genus 0. Now removing the added handle we obtain a 2-component link with the mixed double line smoothed.

As usual, the smoothing process can be applied to interior double lines of a link with boundary verbatim.

To prove Theorem \ref{thm3} we now apply this generalization of the F-lemma to the complex $K$. That gives us a new complex $K'$ with the same boundary $\partial K'= \Gamma$ and with no interior double lines. First we note, that since all crossings of $\Gamma$ are mixed, the complex $K'$ has no cusps for every cusp is a self-crossing of a component. Further, since there are only two components, there are no triple points: one cannot construct a triple point with all double lines incident to it being mixed given only two components of a link.

Therefore the complex $K'$ has neither cusps, nor triple points and Theorem \ref{thm3} is proved. 
\end{proof}

\section{Generalization of the F-lemma}

In this section we present some generalizations of the F-lemma. Lemma \ref{F_old} deals with a 2-knot --- that is a knotted sphere. In the previous section it was generalized to 2-links --- in other words, unions of knotted spheres. But it can be generalized further, to deal with knotted surfaces of arbitrary genus.

\begin{lm}[Generalized F-lemma]
\label{F_general}
Let $K$ be a surface knot of genus $g$ and $K'$ be a complex obtained from $K$ by smoothing a double line. Then the following holds: $$\chi(K)\le\chi(K').$$
\end{lm}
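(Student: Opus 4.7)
The plan is to localize the problem, reducing the global inequality to a computation in a small regular neighborhood of the double line being smoothed. Since the smoothing of $D$ only modifies $K$ inside a neighborhood $N$ of $D$ together with the adjacent cusps or triple points, writing $K = N \cup R$ and $K' = N' \cup R$ with common boundary $\partial N = \partial N'$, inclusion–exclusion yields
\[
\chi(K') - \chi(K) \;=\; \chi(N') - \chi(N),
\]
so it suffices to show $\chi(N') \geq \chi(N)$ locally, for at least one of the two smoothing choices at $D$.

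Next I would verify this local inequality case by case according to the type of $D$: (a) cyclic double circle, (b) arc between two cusps, (c) arc from a cusp to a triple point, (d) arc between two triple points. In case (a), $N$ is topologically two annuli identified along their common core circle, so $\chi(N)=0$, and either smoothing replaces $N$ by two disjoint annuli, giving $\chi(N')=0$ and equality. In the remaining cases I would work with explicit local models — the Whitney umbrella for a cusp and three transverse coordinate half-planes for a triple point — writing $N$ and $N'$ as finite CW complexes and computing their Euler characteristics directly.

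The principal obstacle will be the analysis at triple points: smoothing a double line terminating at a triple point $t$ forces a local rearrangement of the two remaining double lines meeting at $t$ (they may merge into a single arc, persist as two arcs now meeting transversely, or otherwise redistribute), producing several sub-configurations that must each be checked. A convenient way to dispatch the entire case analysis uniformly is via the identity
\[
\chi(K) \;=\; \chi(\Sigma) + \chi(S) - \chi(\tilde S),
\]
where $S$ is the singular locus of $K$ and $\tilde S = f^{-1}(S) \subset \Sigma$: each of the three summands changes in a computable way under the smoothing, and summing the local changes gives the required inequality. This parallels the regluing argument in the proof of Lemma \ref{F_old} while now accommodating the additional handles on $\Sigma$ when $g > 0$.
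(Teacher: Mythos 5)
The paper itself states Lemma \ref{F_general} without any proof (the only justification offered anywhere is the remark that the original F-lemma ``can be proved via a regluing process''), so your proposal cannot be matched against a written argument; judged on its own, your strategy --- localize to a regular neighbourhood of the double line, classify the double line by the nature of its endpoints, and do Euler-characteristic bookkeeping on explicit local models --- is the natural route and is consistent with the regluing idea the paper alludes to. However, there are two concrete gaps. First, you reduce to showing $\chi(N')\geq\chi(N)$ ``for at least one of the two smoothing choices at $D$,'' but the lemma asserts the inequality for \emph{any} complex obtained by smoothing a double line, and the paper really uses this stronger form: in Section 6 the estimate (\ref{genus_estimate}), which is derived from Lemma \ref{F_general}, is applied to the ``bad'' smoothing of the double line $\gamma$, not to a favourably chosen one. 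An existential version of the local inequality would suffice to recover Lemma \ref{F_old} but not the applications of Lemma \ref{F_general}.

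Second, you need to decide which Euler characteristic is being bounded. The paper's own use of the lemma ($\chi(L)=\sum_i(2-2g_i)$ in the derivation of (\ref{genus_estimate})) makes clear that $\chi(K)$ means the Euler characteristic of the resolving surface $\Sigma$ with $f\colon\Sigma\to K$, whereas your closing identity $\chi(K)=\chi(\Sigma)+\chi(S)-\chi(\tilde S)$ treats $\chi(K)$ as the Euler characteristic of the singular image complex. These two quantities differ by $\chi(\tilde S)-\chi(S)$, which itself changes under the smoothing, so an inequality for one does not transfer to the other without an extra comparison. Relatedly, the cases carrying all the content are exactly the ones you defer: for a cyclic double line the preimage is two circles and cut-and-reglue visibly preserves $\chi(\Sigma)$, but for a double line between two cusps the preimage is a single circle double-covering the arc with two fold points, and for a line passing through a triple point the third sheet contributes an isolated preimage point and the other two double lines reconnect; in these cases the modification of $\Sigma$ is not a plain cut-and-swap along disjoint circles, and verifying that $\chi(\Sigma)$ cannot decrease there is precisely the substance of the lemma. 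As written, the proposal is a plausible plan whose decisive computations remain to be carried out.
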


This lemma immediately gives the following

\begin{crl}
Let $K$ be a surface knot of genus $g$ and $K'$ be a complex obtained from $K$ by smoothing a double line. Let $K'$ be a connected oriented manifold. In that case, $K'$ is a surface knot of genus $g' \le g.$
\end{crl}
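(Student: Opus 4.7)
The plan is to deduce this corollary directly from Lemma~\ref{F_general} combined with the classification of closed oriented surfaces; since the hypothesis already hands us that $K'$ is a connected oriented manifold, essentially all the work is in translating Euler characteristics into genera.

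First I would observe that $K'$ must be closed: the surface knot $K$ is closed, and the smoothing operation is purely local near the chosen double line, so no boundary can be introduced. Combined with the hypothesis that $K'$ is a connected oriented manifold, this forces $K'$ to be a closed connected oriented surface. By the classification of such surfaces, $K'$ carries a well-defined genus $g'\ge 0$ with $\chi(K') = 2 - 2g'$. Analogously, $K$ being a surface knot of genus $g$ means --- using the convention, consistent throughout the paper, that the genus of a complex is the genus of its preimage surface --- that $\chi(K) = 2-2g$.

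Next I would simply apply Lemma~\ref{F_general}: it yields $\chi(K) \le \chi(K')$, i.e., $2-2g \le 2-2g'$, which rearranges to $g'\le g$. Because $K'$ is a manifold it has no singularities (no double lines, cusps, or triple points remain), so its source surface agrees with itself; therefore $K'$ is a bona fide surface knot of genus $g'$, as required.

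The only point requiring care, not real difficulty, is ensuring that the Euler-characteristic convention appearing in Lemma~\ref{F_general} is the same one used to define the genus of surface knots via the preimage surface. Once that is fixed, the corollary is essentially a one-line consequence of the generalized F-lemma, with no additional combinatorial obstacle to overcome.
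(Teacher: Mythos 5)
Your argument is correct and matches the paper's intent exactly: the paper treats this corollary as an immediate consequence of Lemma~\ref{F_general}, and your translation $\chi = 2-2g$ via the classification of closed oriented surfaces (with $\chi(K)\le\chi(K')$ giving $g'\le g$) is precisely the computation the paper itself uses just below for the link-genus estimate. Nothing is missing.
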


In other words, a smoothing can only decrease the genus of a knot. In particular, the original F-lemma follows directly from Lemma \ref{F_general}: for every double line of a 2-knot there is a smoothing producing a connected manifold. Its genus must be less than or equal to 0, and thus it is a sphere, in other words the resulting complex is a 2-knot.

That statement holds in case of surface links as well: the Euler characteristic of the link can only increase. In particular, if the number of link components is preserved by the smoothing, then the total genus of the link can only decrease. Moreover, we can give the following estimate.

Let $L$ be an $n-$component link and $L'$ be an $n'-$component link obtained from $L$ by smoothing of a double line. Let us denote the total genus of $L$ by $g$ and the total genus of $L'$ by $g'$. Then we have 

\begin{equation}
\label{genus_estimate}
g'\le g+(n'-n).
\end{equation}

The proof of that fact is trivial. Since all the link components are spheres with handles, we have $$\chi(L)=\sum_{i=1}^n (2-2g_i),$$ $$\chi(L')=\sum_{i=1}^{n'}(2-2g_i'),$$ where $g_i, g_i'$ denote the genera of the components of the links $L, L'$, respectively. Since $\chi(L)\le\chi(L')$, we obtain the necessary estimate.

Evidently, those statements are also true in case of knots and links with boundary if the smoothed double line is interior.

%provide an algorithm of smoothing of a double line of a surface knot $K$ (of an arbitrary genus $g$) and use this explicit process to prove some generalizations of F-lemma.

%\section{The case of no cusps}

%\section{Odd Free Knots}

\section{Further developments}

In this section we present several further results dealing with some particular cases of knot sliceness and cobordism.

\begin{thm}
Let $\Gamma_1$ and $\Gamma_2$ be two framed 4-graphs. Assume there is a cobordism of genus $g$ between those graphs such that the corresponding complex $K, \; \partial K = \Gamma_1 \sqcup \Gamma_2,$ has no triple points and no double lines connecting cusps and points on the boundary. Then there exists an elementary cobordism of genus at most $g$ with the same boundary.
\end{thm}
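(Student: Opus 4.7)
The plan is to iteratively apply the (generalized) F-lemma to remove all interior double lines of $K$ and obtain an elementary cobordism with the same boundary and genus at most $g$. The key preliminary observation is that, under the hypothesis that no double line connects a cusp to the boundary, every cusp of $K$ must be an endpoint of a type-2 interior double line (between two cusps): a cusp cannot lie on a cyclic double line (which has no endpoints), it cannot lie on a type-3 line (whose endpoints are on the boundary), and type-4 lines are forbidden by assumption.

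Starting from $K$, I would smooth interior double lines one at a time. Each such smoothing preserves the boundary $\partial K = \Gamma_1 \sqcup \Gamma_2$, is a local operation that cannot create a new triple point (and none are present initially), and, when performed on a type-2 line, simultaneously eliminates both cusps at its endpoints. The procedure terminates when no interior double lines remain, yielding a complex $\tilde K$ whose boundary is $\Gamma_1 \sqcup \Gamma_2$ and which possesses no interior double lines, no triple points, and --- by the preliminary observation --- no cusps. Hence $\tilde K$ is an elementary cobordism between $\Gamma_1$ and $\Gamma_2$.

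The main obstacle is controlling the genus through the sequence of smoothings. By Lemma \ref{F_general} every smoothing satisfies $\chi(K_{i+1}) \ge \chi(K_{i})$, and combining these gives $\chi(\tilde K) \ge \chi(K)$. Since the boundary is preserved, the estimate \eqref{genus_estimate} translates this into $g(\tilde K) \le g + (n(\tilde K) - n(K))$, where $n(\cdot)$ counts the number of connected components of the underlying surface. A careless smoothing could in principle split a component and thereby loosen the genus bound. To avoid this, at each step I would exploit the freedom in the F-lemma to select a resolution of the interior double line that does not increase the number of surface components, using the connected-sum regluing technique from the proof of Theorem \ref{thm3} whenever a purely local smoothing would otherwise split a component. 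Verifying that such a choice is always available, and hence that the final genus satisfies $g(\tilde K) \le g$, is the principal technical point of the argument.
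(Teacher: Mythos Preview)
Your approach coincides with the paper's: observe that under the hypotheses every cusp lies on an interior double line, then invoke the generalized F-lemma to smooth those lines away and obtain a complex with neither cusps nor triple points. The paper's proof is two sentences and does not spell out the genus-control step beyond citing the generalized F-lemma; you are right that the lemma directly controls only $\chi$, and that one must also keep the number of components from growing in order to bound the genus via the estimate \eqref{genus_estimate}. Your plan to choose, at each step, a resolution of the double line that does not split a component (falling back on the connected-sum trick of Theorem~\ref{thm3} when a naive smoothing would disconnect) is precisely the mechanism underlying the original F-lemma, and is the natural way to fill the gap the paper leaves implicit.

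One minor simplification: an \emph{elementary} complex is required only to be free of cusps and triple points, not of interior double lines altogether. Hence you need not smooth the cyclic interior double lines at all --- only the type-2 lines (those between two cusps) must be removed, and your preliminary observation already isolates these as the only lines meeting a cusp. Restricting the procedure to type-2 lines shortens the argument and makes the component-control step easier, since a type-2 double line is the trace of a finger move and one of its two smoothings simply undoes the finger without altering the underlying surface.
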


%The proof is evident: there is no way for double lines to have intersections. Hence we can 
%undo interior lines ending at two cusps. Undoing such double lines can not increase the genus.
%After that we construct Rasmussen's surface.

\begin{proof}
Since the complex $K$ has no triple points, its double lines do not intersect. Moreover, all the cusps are incident to its interior double lines, and thus can be smoothed due to the generalized F-lemma without genus increase.
\end{proof}

Earlier we proved some properties of odd knots using Gaussian parity on 2-knots and 2-knots with boundary (see Theorem \ref{th1} and Theorem \ref{th2}). In general, Gaussian parity may not exist on surface knot due to different path on their surface diagrams being non-homotopic. Nevertheless, in some cases Gaussian parity still exists.

\begin{dfn}
A surface diagram on a cylinder $C$ is called {\rm normal} if it satisfies the following conditions:

\begin{enumerate}
\item If a curve $\gamma$ has its endpoints on the same boundary component of $C$, the curve $\gamma'$ paired with $\gamma$ has its endpoints on the same boundary component of $C$;
\item If paired curves $\gamma$ and $\gamma'$ have a common endpoint, and their other endpoints lie on the boundary of $C$, then they lie on the same boundary component of $C$;
\item If a curve $\gamma$ has endpoints on different boundary components of $C$, the curve $\gamma'$ paired with $\gamma$ has its endpoints also on different boundary components of $C$.
\end{enumerate}
\end{dfn}

In particular, those properties mean that every meridian of the cylinder $C$ not passing through curves endpoints intersects evenly many curves of the diagram. That means that Gaussian parity is correctly defined on such diagrams.

Now consider two genus zero cobordant free knots. The spanning surface for them is evidently a normal cylinder. Thus the following theorem holds:

\begin{thm}
Let $K_1$ and $K_2$ be two odd free knots. Then there exists a cobordism of genus 0 between $K_1$ and $K_2$ if and only if there exists an elementary genus 0 cobordism between them.
\end{thm}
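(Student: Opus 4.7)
The plan is to mirror the proof of Theorem \ref{th2}, transferring the parity-projection machinery from spanning disks to cobordism cylinders. The reverse implication is immediate since every elementary cobordism is a cobordism in the general sense, so all the content lies in showing that a genus-$0$ cobordism between two odd free knots can be upgraded to an elementary one.

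Suppose $K$ is a genus-$0$ cobordism with $\partial K = K_1\sqcup K_2$. The underlying surface is a cylinder $C$, and the oddness of both $K_1$ and $K_2$ makes the associated surface diagram on $C$ normal in the sense of the preceding definition. Consequently Gaussian parity is well-defined on the whole cobordism, and every crossing of $\partial K$ is odd. I would then apply the generalized F-lemma \ref{F_general} repeatedly to smooth every interior double line of $K$ while fixing the boundary; since each smoothing preserves connectedness and orientability and cannot increase the genus, the resulting complex $K'$ remains a genus-$0$ cylinder. Having no interior double lines, $K'$ also has no interior and no mixed triple points, and its interior cusps are eliminated along with the double lines that terminated at them.

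It remains to rule out exterior triple points and boundary cusps. An exterior triple point of $K'$ is incident to three boundary double lines connecting it to crossings $c_1,c_2,c_3$ of $\partial K$; Lemma \ref{fmpa} forces the parity sum $p(c_1)+p(c_2)+p(c_3)$ to vanish modulo $2$, which contradicts the hypothesis that all $c_i$ are odd. A cusp incident to a boundary crossing $c$ would, by the cobordism analogue of Lemma \ref{stably} obtained from iterated parity projection, force $c$ to be stably even, again contradicting oddness. Therefore $K'$ is free of cusps and triple points, so it is an elementary genus-$0$ cobordism between $K_1$ and $K_2$.

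The main obstacle is the very first step: verifying that Gaussian parity and the two key lemmas \ref{fmpa} and \ref{stably}, both originally stated for spanning complexes of a single graph, extend verbatim to the cylinder setting. This is exactly what the normality of the cylinder diagram delivers, since it guarantees that every meridian crosses the diagram an even number of times and hence that the parities of crossings in $K_1$ and $K_2$ are compatible across the interior of $C$; once this consistency is in place, the parity-based obstruction to triple points and boundary cusps carries over without change and the proof proceeds as above.
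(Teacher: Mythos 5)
Your proposal is correct and follows essentially the same route as the paper: first remove all interior double lines by the F-lemma on the genus-zero cobordism cylinder (where Gaussian parity is available because the diagram is normal), then use the parity lemmas (Lemma \ref{fmpa} for triple points and the cusp--crossing parity observation) to conclude that no cusps or triple points can survive when all boundary crossings are odd, so the resulting cobordism is elementary. The only minor inaccuracy is attributing the normality of the cylinder diagram to the oddness of $K_1$ and $K_2$ rather than to the genus-zero cobordism itself, but this does not affect the argument.
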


\begin{proof}
The proof of this theorem follows the same pattern as the proof of Theorem \ref{th1}.

First we delete all interior double lines by means of the F-lemma.

Now all the double lines left have their endpoints on the boundary of the spanning surface and thus must be odd. Therefore, the spanning surface has neither cusps nor triple points.
\end{proof}

When dealing with smoothings (either of a crossing of a knot or of a double line of a surface knot) one has to possibilities: either the smoothing yields a connected object (a knot) or or not (a link). Let us call the smoothings of the first type ``good'' and of the second type --- ``bad''.

\begin{lm}
Let $K$ be a standard complex of genus 0 whose boundary $\partial K$ is a framed $4$-graph
$\partial K=\Gamma$. Assume there is a double line of $K$ connecting a vertex $x$ of $\partial K$
to a cusp point $p$ of $K$.

Consider a free knot $\Gamma'$ obtained from $\Gamma$ by the one of two smoothings at the vertex $x$, that gives a free knot (not a lin k). Then $\Gamma'$ is slice.

%Then either of the two possibilities takes place: either the free knot $\Gamma'$
%obtained from $\Gamma$ by smoothing at $x$ is slice or the free link $\Gamma''$ obtained
%from $\Gamma'$ by the other smoothing at $x$ is slice.
\end{lm}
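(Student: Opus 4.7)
The plan is to construct a genus-zero spanning complex $K'$ for $\Gamma'$ directly by smoothing the double line $\ell$ that joins $x$ to the cusp $p$ inside $K$, and then apply the generalized F-lemma to bound the resulting genus.

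First I would analyze the local surgery along $\ell$. A cusp is the endpoint of a unique double line, so $\ell$ is the only double line meeting $p$, and near $p$ two sheets of $K$ fold together. Smoothing $\ell$ separates these two sheets along its entire length; this simultaneously deletes $\ell$, resolves the cusp $p$, and replaces the boundary crossing $x$ by one of its two smoothings, the one selected by the local pairing of sheets of $K$ along $\ell$. The resulting standard complex $K'$ has boundary equal to this smoothing of $\Gamma$ at $x$, which by the hypothesis of the lemma is the free knot $\Gamma'$.

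Next I would use the generalized F-lemma (Lemma \ref{F_general}) to control the genus. Let $\Sigma$ and $\Sigma'$ be the preimage surfaces of $K$ and $K'$ respectively. Since $K$ has genus zero and connected boundary, $\Sigma$ is a disc with $\chi(\Sigma)=1$. The generalized F-lemma gives $\chi(\Sigma') \ge \chi(\Sigma) = 1$. On the other hand, $\Gamma'$ being a knot forces $\Sigma'$ to be connected with a single boundary circle, so $\chi(\Sigma') = 1 - 2g(K') \le 1$. Hence $g(K')=0$ and $\Gamma'$ is slice.

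The main obstacle is verifying that the smoothing at $x$ produced by the surgery along $\ell$ is precisely the one specified in the hypothesis, namely the smoothing giving a knot rather than a link. The local pairing of sheets along $\ell$ at $x$ uniquely determines one of the two smoothings of the crossing, and one must check that this local choice is consistent with the assumption that $\Gamma'$ is connected; otherwise the construction would produce a link and a separate argument (for example, a connected-sum trick analogous to the one in the proof of Theorem \ref{thm3}) would be required. A secondary technical point is to confirm that the generalized F-lemma extends verbatim to smoothings of boundary double lines meeting a cusp, which should follow from the fact that the smoothing is a local surgery on $\Sigma$ whose effect on the Euler characteristic is independent of whether the endpoints of the double line are interior or on the boundary.
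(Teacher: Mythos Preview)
Your approach is different from the paper's. Rather than smoothing inside $K$, the paper doubles $K$ along $\Gamma$ to obtain a closed $2$-knot $KK$; the line $\ell$ and its mirror form an interior cusp-to-cusp double line $\gamma$, to which the ordinary F-lemma applies directly. The paper then argues that the ``good'' (connected, sphere-producing) smoothing of $\gamma$ must induce the knot-smoothing at $x$: since $\gamma$ meets the section $\Gamma$ transversally in the single point $x$, the disconnecting smoothing of $\gamma$ forces the section itself to disconnect. Hence $\Gamma'$ appears as a section of a sphere and bounds a disc.

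Your direct route leaves precisely the step you call ``the main obstacle'' unproved: you never show that the surgery along $\ell$ yields $\Gamma'$ rather than the two-component link $\Gamma''$, and this is the entire content of the lemma. Your appeal to the generalized F-lemma is also not justified as stated: Lemma~\ref{F_general} is formulated for closed surface knots, and the paper explicitly restricts its extension to complexes with boundary to \emph{interior} double lines, whereas $\ell$ is a boundary line. The doubling trick in the paper is exactly what converts $\ell$ into an interior line so that the existing machinery applies. That said, your idea can be completed without invoking the F-lemma at all: the smoothing of $\ell$ that simply unfolds the Whitney-umbrella singularity at $p$ is a perturbation of the map $f$ and leaves the preimage disc $\Sigma$ topologically unchanged; the resulting $K'$ therefore still has preimage a single disc, hence genus $0$ and exactly one boundary circle, which forces the induced smoothing at $x$ to be the knot-smoothing $\Gamma'$. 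Making this observation explicit would close both gaps simultaneously.
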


\begin{proof}
Consider the complex $KK$ obtained by glueing two copies of $K$ along their boundary
$\partial K=\Gamma$. Since $K$ is a knotted disc, the complex $KK$ is a knotted sphere, i.e. a 2-knot, and the graph $\Gamma$ is a certain transversal section of the complex $KK$.

The complex $KK$ has a double line $\gamma$ which connects the two copies of the cusp $K$. There are two possible smoothing of this double line. One of them gives a connected complex which is a knotted sphere due to the F-lemma. %Let us call this smoothing ``good''. 
Smoothing of a double line induces smoothing on a transversal section of the knot. It is easy to see, that the good smoothing of the double line $\gamma$ induces a one-component smoothing of the knot $\Gamma$ in the transversal section of $KK$. Indeed, two possible smoothings of $\Gamma$ correspond to two possible smoothings of the vertex $x$. The ``bad'' smoothing of $\Gamma$ separates $KK$ into two connected components and thus the transversal section is separated into two connected components as well because the smoothed double line intersects it transversally in one point. Therefore, the good smoothing of $\gamma$ can only correspond to the good smoothing of $x$. That completes the proof.

%Let us apply Fedoseev's lemma to the double line corresponding to the double line $\gamma$,
%we see that the result of cutting along this double line on the upper $K$ will lead either
%to a 

%{\bf HERE WE GET A PROBLEM: FOR THE KNOT $K'$ WE GET A SPANNING DISC, BUT FOR THE
%LINK $K''$ WE CAN HAVE EITHER A COBORDISM OR A CYLINDER BETWEEN THE TWO COMPONENTS.
%THIS IS NOT QUITE WHAT I WANT.}
\end{proof}

Finally let us look at the ``bad'' smoothing of the vertex $x$ from the previous theorem. As it was shown, it corresponds to the ``bad'' smoothing of the double line $\gamma$ of the complex $KK$ which breaks it into a 2-component link. But due to the estimate (\ref{genus_estimate}) the total genus of those two components is at most 1. That means that at least one them is a knotted sphere. Thus, at least one component of the graph $\Gamma''$ obtained from $\Gamma$ by the ``bad'' smoothing at the vertex $x$ is capped with a disc, in other words is slice.

\end{document}